\documentclass[a4, 10 pt, conference]{ieeeconf}
\IEEEoverridecommandlockouts 
\usepackage{amsmath,amssymb,amsfonts}

\usepackage[T1]{fontenc}
\usepackage[english]{babel}
\usepackage[latin9]{inputenc}
\usepackage{graphicx}
\usepackage{subcaption}
\usepackage{comment}
\usepackage{url}
\usepackage[font=footnotesize]{caption}
\usepackage{bbding}
\usepackage{color,xcolor}

\usepackage{enumerate}
\usepackage{times}
\usepackage[cal=boondox]{mathalfa}

\usepackage{dutchcal}

\usepackage{breqn}

\usepackage[mathscr]{euscript}

\newtheorem{theorem}{Theorem}
\newtheorem{prop}{Proposition}

\providecommand{\norm}[1]{{\left\lVert#1\right\rVert}}

\providecommand{\pr}[1]{{\left(#1\right)}} 
\providecommand{\pp}[1]{{\left[#1\right]}}
\providecommand{\set}[1]{{\left\lbrace#1\right\rbrace}}

\newcommand{\normi}[3]{\norm{#1}_{
		\ifthenelse{\equal{#2}{1}}{H_0^1\pr{\mathcal{O}_{#3}}}{%
			\ifthenelse{\equal{#2}{-1}}{H^{-1}\pr{\mathcal{O}_{#3}}}{}}}}

\begin{document}

\title{\LARGE \bf Stochastic Control of Addiction with State-Dependent Jump Relapse}

\author{\sc
Dounia Aissi,
Ioana Ciotir,
Dan Goreac,
and Juan Li%
\thanks{D. Aissi is a MSc student at Universit\'{e} Laval, 2425, rue de l'Agriculture, Qu\'{e}bec (QC) G1V 0A6, Canada
        (e-mail: dounia.aissi@crchudequebec.ulaval.ca).}%
\thanks{I. Ciotir is with INSA de Rouen, Normandie University, LMI (EA 3226 -- FR CNRS 3335), 76000 Rouen, France
        (e-mail: ioana.ciotir@insa-rouen.fr).}%
\thanks{D. Goreac is with \'Ecole d'Actuariat, Universit\'{e} Laval, 2425, rue de l'Agriculture, Qu\'{e}bec (QC) G1V 0A6, Canada
        (e-mail: dan.goreac@act.ulaval.ca).}%
\thanks{D. Goreac and J. Li are with the School of Mathematics and Statistics, Shandong University Weihai, Weihai 264209, P.R. China
        (e-mail: juanli@sdu.edu.cn).}%
\thanks{D. Goreac is also with Universit\'{e} Gustave Eiffel, LAMA (UMR 8050), UPEM, UPEC, CNRS, F-77454 Marne-la-Vall\'{e}e, France
        (e-mail: dan.goreac@univ-eiffel.fr).}%
}

\maketitle

\begin{abstract}
We study a continuous-time rational addiction model where addiction capital follows a piecewise deterministic Markov process with state-dependent jumps capturing relapse and recovery. The instantaneous utility combines consumption and addiction capital via a power specification, leading to Hamilton-Jacobi-Bellman (HJB) equations with nonlocal jump terms. In the capped, bounded-control case we obtain a unique bounded viscosity solution and prove that optimal policies are bang-bang, switching between minimal and maximal consumption, while in the uncapped case we derive explicit linear feedback controls and closed-form value functions in several parameter regimes.

\emph{Keywords.} Stochastic optimal control; Piecewise deterministic Markov processes; Rational addiction; State-dependent relapse; Bang-bang control

\emph{MSC (2020) classification.} 93E20; 60J25; 49L20; 91B42; 91B74.
\end{abstract}
\section{Introduction}
\subsection{Historical notes on the Becker-Murphy model for addiction}
In the deterministic rational-addiction framework of Becker and Murphy \cite{BeckerMurphy1988}, the representative agent maximizes an intertemporal utility functional
\[
\sum_{t=0}^{\infty} \beta^{t} \, u\bigl(c_t, S_t\bigr),
\qquad 0<\beta<1,
\]
where $c_t$ denotes current consumption of the addictive good and $S_t$ is an ``addiction stock'' summarizing past consumption. The evolution of the stock is typically given by a linear law of motion
\[
S_{t+1} = (1-\delta)\,S_t + c_t,
\qquad \delta \in (0,1),
\]
so that past consumption raises $S_t$ and future marginal utility of $c_t$ (reinforcement), while the depreciation rate $\delta$ captures forgetting and recovery. Preferences are specified so that (i) higher $S_t$ increases the marginal utility of $c_t$ (reinforcement), (ii) increases in $S_t$ shift future utility down if $c_t$ is not maintained (withdrawal), and (iii) forward-looking agents internalize how current $c_t$ changes future $S_{t+k}$ and thus future utility, see \cite{BeckerMurphy1988,GruberKoszegi2001}. This leads to Euler equations linking $c_t$ and $c_{t+1}$ and to testable predictions such as adjacent complementarity (consumption today and tomorrow move together) and long-run price elasticities that may exceed short-run ones.

The stochastic Becker--Murphy models, e.g., in \cite{YangZhang2022}, extend this structure by adding a Brownian-driven noise to the law of motion of the addiction stock, so that $S_t$ (or $A(t)$ in continuous time) becomes a controlled diffusion; this preserves the forward-looking optimality of the original model while generating realistic features such as random relapses, endogenous cycles of abstinence and binge, and explicit conditions separating explosive, stable, or declining addiction regimes, all within a rational, utility-maximizing framework.
\subsection{Historical notes on PDMPs and control}

Piecewise-deterministic Markov processes (PDMPs) were formally introduced by Davis in the early 1980s as a general class of non-diffusion stochastic models combining deterministic flows with random jumps, see \cite{Davis1984}. The theory was developed further in his monograph on Markov models and stochastic optimization \cite{Davis1993}, where PDMPs appear as a unifying framework for many applied probability models.

The optimal control of such processes was first systematically addressed by Vermes, who introduced the notion of piecewise open-loop controls and studied associated optimality conditions in continuous time in \cite{Vermes1986}. In parallel, Soner considered optimal control problems with state-space constraints for piecewise deterministic processes and laid some of the viscosity-solution foundations for constrained HJB equations in this setting, cf. \cite{Soner1986}. Building on these works, Costa and Dufour and co-authors developed a comprehensive theory of PDMP control, including discounted and average-cost criteria, as well as mixed gradual/impulsive control, with characterizations via dynamic programming and vanishing discount techniques in \cite{CostaDufour2008,Dufour2016}.

More recent contribution concern a viability and invariance theory for controlled PDMPs, using viscosity solutions and normal-cone conditions to characterize viability kernels and reachability sets in applications to gene networks in \cite{GoreacViab2012}. In addition, the author has studied asymptotic control and linear-programming-type formulations for discounted PDMP problems, with occupation measure techniques in \cite{GoreacSICON2015} devoted to models inspired by temperate viruses. In the same spirit, \cite{GoreacSereaZubov2012} develops a capture basin method to treat controlled PDMPs.
\subsection{Towards a better formulation of noise in the Becker-Murphy model}
A PDMP framework is often more appropriate than a pure Brownian model for addiction dynamics because it separates deterministic evolution from random jump events. Between jumps, the addiction state follows an ODE related to tolerance, slow recovery, and so on. Relapses or treatment entries occur at random times as state-dependent jumps, with intensity $\lambda(x)$ directly interpretable as a relapse hazard, see, for instance, \cite{Davis1984,Campillo2019}. This matches clinical descriptions of addiction as a chronic condition with relatively stable phases punctuated by sharp episodes of heavy use or crisis, which are not well represented by small continuous Brownian fluctuations alone, cf. \cite{AddictionNeuroModel2022}. Moreover, PDMPs align naturally with hybrid control problems (continuous dosage vs. impulsive interventions) and event-time data, since $\lambda(x)$ can be estimated and interpreted like a survival-model hazard, whereas Brownian volatility has no such direct behavioural meaning.
\subsection{Structure and findings}
Section~\ref{Sec2} introduces the addiction-capital dynamics, the jump mechanism, and the economic meaning of the model parameters, and establishes basic consistency properties such as existence, nonnegativity, and comparison of addiction paths with respect to consumption levels. Sections~\ref{Sec3} and~\ref{Sec4} analyze, respectively, the capped (bounded) and uncapped consumption frameworks. In the capped case, we characterize the value function as the unique bounded viscosity solution of the associated HJB equation (Theorem~\ref{Th1}) and show that optimal controls are of bang-bang type (Theorem~\ref{Th2}). In the uncapped case, we obtain explicit feedback formulas for the optimal consumption rule and the value function for a broad class of parameters (Theorem~\ref{Th3}). The implications of these results for addiction behavior are discussed in Section~\ref{Sec5}, while conclusions and avenues for future research are presented in Section~\ref{Sec6}; technical proofs of auxiliary results are collected in Section~\ref{SecApp}.
\section{Extended Stochastic Becker--Murphy Model with Classical PDMP Noise}\label{Sec2}

\subsection{Setup and objective}

We retain the Becker--Murphy rational addiction framework, altering the addiction capital as a piecewise deterministic Markov process (PDMP) with state-dependent jump intensity. The addict chooses a consumption path \(c(t)\) to maximise
\begin{equation}\label{Cost}
  \max_{c(\cdot)} \;
  \mathbb{E}\left[\int_0^{+\infty} u(c(t),A(t))\,e^{-\rho t}\,dt\right],
\end{equation}
where \(A(t)\) is the addiction capital and \(\rho>0\) is the time preference parameter.

The instantaneous utility is taken to be proportional to a power of the (upper-capped) addiction capital 
\begin{equation}
  u(c,A) = - \min\set{A,A_{\max}}^{\omega} c^{-\phi},
\end{equation}
with \(\phi>0\) and \(\omega>0\).

\subsection{PDMP dynamics for addiction capital}

The addiction capital \(A(t)\) evolves as a PDMP:
\begin{equation}\begin{split}
  dA(t) = &\bigl[c(t) - \delta A(t)\bigr]\,dt \\&+ \pr{\theta_1+\theta_2\min\set{ A(t^-),A_{\max}}}\,dN(t),
  \end{split}
  \label{eq:PDMP-classic-A}
\end{equation}
where
\begin{itemize}
  \item between jumps, \(A(t)\) satisfies the ODE \(\dot A(t) = c(t) - \delta A(t)\);
  \item $A_{\max}$ is a realistic maximal cap; it will be used to bound both the jumping mechanism (intensity and incremental jump) and the utility. We will explain both the case when \begin{enumerate}
      \item $A_{\max}=\infty$, but jumps are only linked to increase addiction, or
      \item $A_{\max}<\infty$, but, in this case, curative measures can translate in a decrease in addiction, allowing more freedom;
  \end{enumerate}
  \item at each jump time \(\tau_k\), \[A(\tau_k) = A(\tau_k^-)+\theta_1+\theta_2\min\set{A(\tau_k^-),A_{\max}},\] with fixed parameters \(\theta_j\in\mathbb{R}\);
  \item \(N(t)\) is a counting process with state-dependent intensity \(\lambda(A(t))\).
\end{itemize}

A classical PDMP specification assumes
\begin{equation}
  \lambda(a) = \lambda_1 + \lambda_2 \min\set{a,A_{\max}}, \qquad a \ge 0,
\end{equation}
with constants \(\lambda_1 \geq 0\), \(\lambda_2 \ge 0\). Thus the jump rate increases linearly with the current addiction level \(A(t)\), and jumps occur according to the usual Markov jump mechanism defined by \(\lambda(\cdot)\) and the post-jump map \(a \mapsto a+\theta_1+\theta_2\min\set{a,A_{\max}}\). Fur further reference, we denote by
\begin{equation}\label{J}J(a):=\theta_1+\theta_2\min\set{a,A_{\max}},\ \text{for }a\in\mathbb{R}.\end{equation}

\subsection{Interpretation of the parameters}

In this PDMP extension, the parameters can be interpreted as follows:

\begin{itemize}
  \item \(c(t)\) : consumption of the addictive good at time \(t\).
  \item \(A(t)\) : accumulated addiction capital at time \(t\).
  \item \(\phi\) : sensitivity (elasticity) of instantaneous utility with respect to consumption \(c\). A larger \(\phi\) means utility is more sensitive to changes in \(c\).
  \item \(\omega\) : sensitivity (elasticity) of instantaneous utility with respect to addiction \(A\). A larger \(\omega\) increases the impact of addiction capital on preferences (tolerance, reinforcement).
  \item \(\delta\) : rate of decline (depreciation) of addiction capital over time; it captures natural recovery/forgetting in the absence of consumption.
  \item \(\rho\) : intertemporal impatience (preference for the present); a higher \(\rho\) means the individual discounts future utility more heavily.
  \item \(\theta_1\) : absolute jump size in addiction capital; each jump models a relapse episode that suddenly increases \(A(t)\) (binge, crisis, acute stress).
  \item \(\theta_2\) : relative (or multiplicative) jump size in addiction capital;
  \item \(N(t)\) : counting process that records the number of relapse events up to time \(t\).
  \item \(\lambda(a)\) : jump (relapse) intensity when the addiction capital is \(A(t)=a\).
  \item \(\lambda_1\) : baseline relapse intensity, i.e. the risk of a relapse episode even at very low addiction levels (background vulnerability).
  \item \(\lambda_2\) : effect of current addiction on relapse intensity; a larger \(\lambda_1\) means that higher addiction capital makes relapses more likely, creating a feedback loop between \(A(t)\) and the occurrence of relapse jumps.
\end{itemize}
\subsection{Existence and monotonicity}
The control processes $c$ taking their values in $\mathbb{R}_+$ are considered \emph{admissible} if they are predictable and locally in time square integrable. Then, \cite{Graham1992} (e.g. Theorem 1.2) guarantee the existence and uniqueness of the solution.\\
Furthermore, we have the consistency result.
\begin{prop}\label{PropPoz}
    Let us assume that $\delta>0, \theta_1>0$, and $\theta_2>-1$. Then, if $c$ is a $\mathbb{R}_+^*$-valued predictable process and $A(0)\geq 0$, then $A\geq 0$, $\mathcal{L}eb\otimes\mathbb{P}$-a.s.\footnote{Here and after, $\mathbb{R}_+^*$ stands for strictly positive reals, $\mathbb{R}_+^*$ for non-negative reals, and $\mathcal{L}eb$ for Lebesgue measure on the real axis; "a.s." reads almost surely.}
\end{prop}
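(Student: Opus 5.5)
The plan is a pathwise argument along the piecewise deterministic structure: show that nonnegativity is preserved on each inter-jump interval by the flow, and at each jump time by the post-jump map $a\mapsto a+J(a)$. Then glue the pieces by induction over the jump times $\tau_1<\tau_2<\dots$. Since $\lambda$ is bounded by $\lambda_1+\lambda_2A_{\max}$ when $A_{\max}<\infty$ (and in general the solution from \cite{Graham1992} is nonexplosive), we have $\tau_k\to\infty$ a.s. The induction therefore covers all of $\mathbb{R}_+$, up to a null set of $\omega$'s. This gives the $\mathcal{L}eb\otimes\mathbb{P}$-a.s. statement.

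\emph{Flow step.} Fix $\omega$ outside a null set and an interval $[\tau_k,\tau_{k+1})$ with $A(\tau_k)\ge 0$. On it, $A$ solves $\dot A=c-\delta A$, so by variation of constants
\[
A(t)=e^{-\delta(t-\tau_k)}A(\tau_k)+\int_{\tau_k}^t e^{-\delta(t-s)}c(s)\,ds .
\]
Here $c(s)(\omega)>0$, and $c(\cdot)(\omega)$ is locally integrable because it is locally square integrable. Both terms are therefore nonnegative, and even $A(t)>0$ for $t>\tau_k$. In particular, the left limit satisfies $A(\tau_{k+1}^-)\ge 0$.

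\emph{Jump step.} Let $a=A(\tau_{k+1}^-)\ge0$. Then
\[
A(\tau_{k+1})=a+\theta_1+\theta_2\min\{a,A_{\max}\}.
\]
If $\theta_2\ge0$, this is $\ge \theta_1>0$. If $-1<\theta_2<0$, write $m=\min\{a,A_{\max}\}\le a$. Then $a+\theta_2 m\ge a+\theta_2 a=(1+\theta_2)a\ge0$, so $A(\tau_{k+1})\ge\theta_1>0$. Hence $A(\tau_{k+1})\ge 0$, and the induction continues. The base case $[0,\tau_1)$ uses $A(0)\ge0$.

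The main obstacle is the measure-theoretic bookkeeping rather than the inequalities. First, the control is only a.e.\ defined and predictable, so the variation-of-constants formula holds for a.e.\ $\omega$ and the integrand positivity holds $\mathcal{L}eb$-a.e. This is why the conclusion is stated $\mathcal{L}eb\otimes\mathbb{P}$-a.s. Second, one must check that the jump times accumulate only at infinity. For this I would invoke the nonexplosion in \cite{Graham1992} (Theorem 1.2). Alternatively, in the uncapped case, I would dominate $N$ by a Poisson process via a Gronwall-type moment bound on $A$.
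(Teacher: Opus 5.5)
Your proposal is correct and follows the paper's own argument: the flow between jumps keeps $A\ge 0$, so does the post-jump map $a\mapsto a+J(a)$, and you induct over the jump times; your variation-of-constants formula and your bound $a+\theta_2\min\{a,A_{\max}\}\ge(1+\theta_2)a$ for $-1<\theta_2<0$ make rigorous what the paper's appendix asserts in two lines. The one point to drop or repair is your nonexplosion fallback in the uncapped case, an issue the paper does not address at all: if $\lambda_2\theta_2>0$, the drift of $\mathbb{E}[A]$ contains the term $\lambda_2\theta_2\,\mathbb{E}[A^2]$, so no Gronwall bound closes, and in fact the process can then explode in finite time, so there global existence has to be taken as given; in the capped case your bounded-intensity argument is fine.
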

The second useful tool is a comparison result for the solutions.

\begin{prop}\label{PropComp}
    Let us assume that $\delta>0, \theta_1>0$, and $\theta_2\geq 0$. If $c_1\leq c_2$ are $\mathbb{R}_+^*$-valued admissible controls, and $A_1(0)\leq A_2(0)$, then the associated trajectories $A_j$ (starting at $A_j(0)$ and controlled with $c_j$) satisfy\[A_1\leq A_2,\ \mathcal{L}eb\otimes\mathbb{P}-\text{a.s. on }\mathbb{R}_+\times \Omega.\] 
\end{prop}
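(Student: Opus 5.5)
The plan is to realize both trajectories on a common probability space through a thinning (Poisson random measure) representation of the state-dependent counting process. Then $A_1\le A_2$ can be checked pathwise by induction over the successive jump times. This is the representation underlying the existence result of \cite{Graham1992}. Let $\mathcal{Q}(dt,dz)$ be a Poisson random measure on $\mathbb{R}_+\times\mathbb{R}_+$ with intensity $dt\otimes dz$. For $j=1,2$ we write
\[
A_j(t)=A_j(0)+\int_0^t\bigl(c_j(s)-\delta A_j(s)\bigr)ds+\int_0^t\!\!\int_{\mathbb{R}_+} J\bigl(A_j(s^-)\bigr)\mathbf{1}_{\{z\le \lambda(A_j(s^-))\}}\,\mathcal{Q}(ds,dz),
\]
with $J$ as in \eqref{J}. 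Each $A_j$ then has the law of the PDMP \eqref{eq:PDMP-classic-A} controlled by $c_j$, so it suffices to prove the inequality for this coupled version.

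The monotonicity facts needed are the following:
\begin{enumerate}
\item By Proposition~\ref{PropPoz}, both $A_j\ge 0$.
\item Since $\lambda_2\ge 0$, the intensity $\lambda$ is nondecreasing.
\item Since $\theta_1>0$ and $\theta_2\ge 0$, we have $J(a)>0$ for $a\ge 0$.
\item The post-jump map $a\mapsto a+J(a)=a+\theta_1+\theta_2\min\{a,A_{\max}\}$ is nondecreasing.
\end{enumerate}

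Let $0<\tau_1<\tau_2<\dots$ be the ordered atoms of $\mathcal{Q}$ that are accepted by at least one of the two processes. These are the points $(\tau,z)$ with $z\le \max_j\lambda(A_j(\tau^-))$. They are locally finite on $[0,T]$ by the non-explosion part of the well-posedness result; in the capped case this is immediate since $\lambda\le\lambda_1+\lambda_2A_{\max}$.

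We argue by induction on $k$ that $D:=A_2-A_1\ge 0$ on $[\tau_k,\tau_{k+1})$. On such an interval neither process jumps, so $\dot D=(c_2-c_1)-\delta D$. By variation of constants,
\[
D(t)=e^{-\delta(t-\tau_k)}D(\tau_k)+\int_{\tau_k}^t e^{-\delta(t-s)}\bigl(c_2(s)-c_1(s)\bigr)ds\ge 0
\]
whenever $D(\tau_k)\ge 0$; this also settles the initial interval $[0,\tau_1)$. At $\tau_{k+1}$ we have $A_1(\tau_{k+1}^-)\le A_2(\tau_{k+1}^-)$, so $\lambda(A_1(\tau_{k+1}^-))\le\lambda(A_2(\tau_{k+1}^-))$. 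Hence an atom accepted by $A_1$ is also accepted by $A_2$, and only two cases can occur:
\begin{enumerate}
\item Both processes jump. Monotonicity of $a\mapsto a+J(a)$ preserves the order.
\item Only $A_2$ jumps. Then $A_2(\tau_{k+1})=A_2(\tau_{k+1}^-)+J(A_2(\tau_{k+1}^-))>A_2(\tau_{k+1}^-)\ge A_1(\tau_{k+1}^-)=A_1(\tau_{k+1})$.
\end{enumerate}
In either case $D(\tau_{k+1})\ge0$, which closes the induction. Letting $k\to\infty$ and using local finiteness gives $A_1\le A_2$ on all of $\mathbb{R}_+$, outside a null set. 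This yields the claim $\mathcal{L}eb\otimes\mathbb{P}$-a.s.

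The main obstacle is the first step. Since the intensity is state-dependent, the two trajectories cannot be driven by literally the same counting process $N$, so the statement must be read for a suitable coupling. We must check that the thinning representation produces processes with the correct laws and that it is the one used in \cite{Graham1992}. The key structural point is that $\lambda_2\ge 0$ makes the jump set of $A_1$ a subset of that of $A_2$. The hypothesis $\theta_2\ge0$, rather than merely $\theta_2>-1$, is used twice: for the monotonicity of the post-jump map, and for the positivity of the extra jumps of $A_2$.
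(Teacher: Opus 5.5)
Your proof is correct and follows the paper's route: the same Poisson-random-measure thinning coupling, the variation-of-constants comparison between jumps, the inclusion $\{z\le\lambda(A_1(t^-))\}\subset\{z\le\lambda(A_2(t^-))\}$ coming from the monotonicity of $\lambda$, and recurrence over the commonly generated jump times. Your explicit case split makes visible the positivity of $J$ that the paper's appeal to ``monotonicity of $J$'' uses only implicitly; note, however, that $a\mapsto a+J(a)$ is already nondecreasing for $\theta_2\ge -1$, so $\theta_2\ge 0$ is really needed only to ensure $J>0$ in the case where only $A_2$ jumps.
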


\medskip 

The proofs are postponed to the Appendix.
\section{The Capped Framework : Hamilton-Jacobi-Bellman Approach}\label{Sec3}
In this framework, we assume \[0<A_{\max}<\infty,\]but we allow $\theta_2<0$ (recovery in the addiction level). In this framework, since addiction can only get to $0$, we further need to assume
\begin{equation}\label{Asst2}
    \theta_2\geq -1.
\end{equation}
We further assume the control (instantaneous consumption rate) satisfies
\[
0 < c_{\min} \le c(t) \le c_{\max} < \infty.
\]
From a technical point of view, such compact control sets are standard in continuous-time optimal control, since they ensure the existence of optimal controls via Weierstrass-type arguments and existence theorems for infinite-horizon problems \cite{StokeyLucasPrescott1989,SeierstadSydsaeter1987}. In economic applications, bounded controls are routinely imposed to reflect physical and budget constraints on decision variables and to keep the Hamiltonian finite in the HJB formulation \cite{StokeyLucasPrescott1989,SeierstadSydsaeter1987}.

\subsection{Generator and HJB equation}

Let \(V(a)\) denote the value function when the current addiction capital is \(a\), that is the function defined by \eqref{Cost} where the maximum is taken over open-loop feedback controls $c(\cdot)$ taking their values in $\pp{c_{\min},c_{\max}}$. The infinitesimal generator \(\mathcal{L}^c\) of the controlled PDMP \eqref{eq:PDMP-classic-A} applied to a smooth test function \(f\) is
\begin{equation}
  \mathcal{L}^c f(a)
  = f'(a)\bigl(c - \delta a\bigr)
    + \lambda(a)\bigl[f(a+J(a)) - f(a)\bigr].
\end{equation}

Formally, the Hamilton--Jacobi--Bellman equation is
\begin{equation}\label{HJB}
\begin{split}
  &\rho V(a)
  - H(a,V(\cdot),V'(a))=0,\text{ where}\\
  &H(a,V(\cdot),V'(a)):=\\
  &\qquad\min_{c\in\pp{c_{\min},c_{\max}}}\pr{\mathcal{L}^c V(a)-\min\set{a,A_{\max}}^{\omega} c^{-\phi}}.
  \end{split}
\end{equation}
This formal equation can be made rigorous as follows.
\begin{theorem}\label{Th1}
    The value function $V$ is the unique bounded, uniformly continuous viscosity solution to \eqref{HJB} (in the sense of \cite[Definition on page 1112]{Soner1986} with $K=\mathbb{R}$).
\end{theorem}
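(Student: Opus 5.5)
The plan is the classical three-step scheme for infinite-horizon PDMP problems: establish regularity of $V$, derive the dynamic programming principle (DPP) and deduce the viscosity property from it, and conclude with a comparison principle for bounded uniformly continuous sub/supersolutions of \eqref{HJB}.

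\emph{Step 1 (boundedness and uniform continuity).} Since $c\ge c_{\min}>0$ and $\min\{a,A_{\max}\}^\omega\le A_{\max}^\omega$, the running utility satisfies
\[
0\ge u(c,A)\ge -A_{\max}^{\omega}c_{\min}^{-\phi},
\]
so $-A_{\max}^\omega c_{\min}^{-\phi}/\rho\le V\le 0$. For uniform continuity, I would couple two trajectories $A^a, A^b$ driven by the same control and the same Poisson random measure, using the thinning representation of $N$ with intensity $\lambda(A(t^-))$. The drift is $\delta$-Lipschitz. The jump map $a\mapsto a+J(a)$ is Lipschitz with constant $\max(1,|1+\theta_2|)$. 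The intensity $\lambda$ is bounded by $\lambda_1+\lambda_2A_{\max}$ and is Lipschitz. A Gronwall estimate on $\mathbb{E}|A^a(t)-A^b(t)|$ then yields growth at most $e^{Kt}|a-b|$. The probability that the two trajectories decouple before time $T$ (a jump occurs for one but not the other) is at most $\lambda_2\int_0^T\mathbb{E}|A^a-A^b|\,ds$. The utility is Hölder continuous in $A$ (exponent $\min(\omega,1)$) with bounded $c$-factor. To finish, split the cost into the part over $[0,T]$, which is controlled by the estimates above, and the tail beyond $T$, which is bounded by $2\|u\|_\infty e^{-\rho T}/\rho$. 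Optimizing in $T$ gives a modulus of continuity uniform in the control.

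\emph{Step 2 (DPP and viscosity property).} I would first prove the DPP
\[
V(a)=\sup_c\mathbb{E}\Big[\int_0^{\tau}u\,e^{-\rho t}dt+e^{-\rho \tau}V(A(\tau))\Big]
\]
for $\tau=h\wedge\tau_1$ (first jump time). This is standard for PDMPs (Davis, Soner), since between jumps the dynamics are deterministic and, conditionally on no jump, $\tau_1$ has an explicit survival function. From the DPP, the sub- and supersolution properties in the sense of \cite{Soner1986} follow by testing with $C^1$ functions and letting $h\to0$. Constant controls give one inequality. For the other, I would use $\varepsilon h$-optimal controls together with continuity of $(c,a)\mapsto \mathcal{L}^c\varphi(a)-\min\{a,A_{\max}\}^\omega c^{-\phi}$ on the compact set $[c_{\min},c_{\max}]$. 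In the nonlocal term $\lambda(a)[V(a+J(a))-V(a)]$, $V$ itself appears, which is consistent with Soner's definition in which the nonlocal part is evaluated on the function.

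\emph{Step 3 (uniqueness).} This is the main obstacle. I would prove comparison: if $U$ is a bounded u.s.c. subsolution and $W$ a bounded l.s.c. supersolution, then $U\le W$. The method is to double variables with
\[
\Phi(a,b)=U(a)-W(b)-\frac{|a-b|^2}{2\varepsilon}-\beta(a^2+b^2),
\]
where the $\beta$ term localizes on the unbounded domain $\mathbb{R}$. The difficulty is the nonlocal term. It is monotone, because $\lambda\ge0$ and the coefficient of $V(a)$ enters as $-\lambda(a)V(a)$, so it is handled as follows. Suppose $M=\sup(U-W)>0$. At the maximizing pair, the nonlocal difference is bounded by
\[
\lambda(a)\big[U(a+J(a))-W(b+J(b))\big]-\lambda(a)[U(a)-W(b)]+|\lambda(a)-\lambda(b)|\,\|U\|+\|W\|,
\]
and the first bracket is at most $M+o(1)$, so the jump contribution is at most $o(1)$. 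The terms arising from the $\beta$ penalty contribute $O(\beta)(1+|a|+|b|)$, which is controlled because $\beta|a|^2$ stays bounded. Since $\rho>0$, one gets $\rho M\le o(1)$, a contradiction. Uniform continuity of the solutions helps with the $|\lambda(a)-\lambda(b)|$ and $J$ mismatch terms. Combining Steps 1–3, $V$ is the unique bounded uniformly continuous viscosity solution of \eqref{HJB}.
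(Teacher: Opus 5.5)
Your proposal is sound, but it takes a different route from the paper. The paper proves nothing from scratch. It checks hypotheses (1.1)--(1.5) of Soner's Theorem~1.1: Lipschitz drift, $\lambda\ge 0$, bounded $J$ and $\lambda$, and weak continuity of $a\mapsto h(a+J(a))$. It handles the unbounded drift $c-\delta a$ with a remark on the invariance of $\mathbb{R}_+$. Regularity of $V$, the dynamic programming principle, the viscosity property and comparison then all come packaged from Soner.

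You instead re-prove these ingredients directly in this one-dimensional setting:
\begin{itemize}
\item a uniform-continuity modulus for $V$, via a thinning coupling and Gronwall;
\item the DPP up to $h\wedge\tau_1$, giving the viscosity inequalities with the nonlocal term evaluated on $V$ itself, as in Soner's definition;
\item a doubling-of-variables comparison, where the jump term is absorbed through $\lambda\ge 0$, $\sup(U-W)=M$ and the continuity of $W$.
\end{itemize}
The paper's route is much shorter, but it relies on the model genuinely fitting Soner's standing assumptions, which are checked only briefly; invariance of $\mathbb{R}_+$, for instance, does not by itself control the growth of the drift at $+\infty$. Your route is longer and takes the DPP as standard, but it treats the unbounded state space and the linearly growing drift explicitly.

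To make your version airtight, state three things you currently use implicitly.

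First, every bound you invoke holds only for $a\ge 0$: $0\ge u\ge -A_{\max}^{\omega}c_{\min}^{-\phi}$, $0\le\lambda\le\lambda_1+\lambda_2A_{\max}$, boundedness of $J$, and H\"older continuity of $x\mapsto\min\{x,A_{\max}\}^{\omega}$. So you should work on $[0,\infty)$ rather than $\mathbb{R}$. This half-line is invariant, since the drift at $0$ is $c\ge c_{\min}>0$ and $a+J(a)\ge\theta_1\ge 0$ for $a\ge 0$ when $\theta_2\ge -1$. Hence the DPP gives both viscosity inequalities at $a=0$ relative to $[0,\infty)$, and your doubling argument runs unchanged if the maximizing pair hits $0$.

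Second, the penalty contributes $2\beta c(a+b)-2\beta\delta(a^2+b^2)$. Your bound $O(\beta)(1+|a|+|b|)$ holds only because the quadratic part is nonpositive, i.e.\ the drift is dissipative. Say this, since for a general linearly growing drift that term is merely $O(1)$.

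Third, your DPP is a supremum over controls, so it produces the Hamiltonian with $\max_c$ rather than the $\min_c$ printed in \eqref{HJB}. The $\max$ form is the correct equation for the maximization problem \eqref{Cost}, and it is also what Soner's minimization result yields after $V\mapsto -V$. Make explicit which equation you are characterizing.
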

\medskip

The proof follows from a rather direct application of \cite[Theorem 1.1]{Soner1986}. We only need to check the appropriate assumptions, and this is relegated to Section \ref{SecApp}.
\subsection{Optimality considerations}
\begin{theorem}\label{Th2}
    The problem \eqref{Cost} admits an optimal control. If the (optimal) value function is differentiable almost everywhere, then the optimal control is of bang-bang type, i.e., 
    \begin{equation}
        \label{CtrlOpt}
        c^{opt}=\begin{cases}
            c_{\min},\ \text{ if }\pr{V^{opt}}'(a)>- \phi \min\set{a,A_{\max}}^\omega c_{\max}^{-\phi-1},\\
            c_{\max},\ \text{ if }\pr{V^{opt}}'(a)<- \phi \min\set{a,A_{\max}}^\omega c_{\min}^{-\phi-1}.
        \end{cases}
    \end{equation}
    In particular, switches can only happen at points of discontinuity for $\pr{V^{opt}}'$.
\end{theorem}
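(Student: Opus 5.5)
We organise the proof in two parts: existence of an optimal control, then identification of its feedback form from the Hamiltonian in \eqref{HJB}.

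\emph{Existence.} We would not work directly on open-loop controls with values in $\pp{c_{\min},c_{\max}}$. Instead we relax to measure-valued (chattering) controls $\mu_t\in\mathcal{P}(\pp{c_{\min},c_{\max}})$, following the occupation-measure approach of \cite{GoreacSICON2015} or the Vermes/Davis framework \cite{Vermes1986,Davis1993}.

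\begin{enumerate}
\item The relaxed control set is compact in the weak topology, since the control set is compact.
\item The dynamics are affine in $c$: the drift is $c-\delta a$, while the jump intensity $\lambda$ and the jump map $J$ do not depend on $c$. Hence a relaxed control acts on the state only through its mean $\bar c_t=\int c\,\mu_t(dc)\in\pp{c_{\min},c_{\max}}$.
\item The running reward $-\min\set{a,A_{\max}}^\omega c^{-\phi}$ is concave in $c$, because $c\mapsto -c^{-\phi}$ is concave for $\phi>0$. By Jensen's inequality, replacing $\mu_t$ by the Dirac mass at $\bar c_t$ gives the same trajectory and a reward at least as large. So the relaxed supremum is attained by an ordinary control.
\item Upper semicontinuity of the discounted payoff along weakly converging relaxed controls follows from boundedness of the utility. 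Indeed $\min\set{a,A_{\max}}^\omega\le A_{\max}^\omega$ and $c^{-\phi}\le c_{\min}^{-\phi}$, and the discount factor $e^{-\rho t}$ handles the infinite horizon. Continuity of the PDMP law with respect to the drift comes from the Lipschitz data, as in \cite{Graham1992}.
\end{enumerate}

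Alternatively, one can use Theorem~\ref{Th1}: a measurable selection of minimisers in the Hamiltonian, combined with a verification argument for viscosity solutions, yields an optimal feedback.

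\emph{Bang-bang form.} At a point where $V=V^{opt}$ is differentiable, the viscosity equation of Theorem~\ref{Th1} holds classically. There the $c$-dependent part of the Hamiltonian is
\[
g(c):=V'(a)\,c-m\,c^{-\phi},\qquad m:=\min\set{a,A_{\max}}^\omega\ge0 .
\]
We then compute $g'(c)=V'(a)+\phi m c^{-\phi-1}$, and observe that $c\mapsto\phi m c^{-\phi-1}$ is decreasing on $\pp{c_{\min},c_{\max}}$. This gives two cases.

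\begin{itemize}
\item If $V'(a)>-\phi m c_{\max}^{-\phi-1}$, then $g'>0$ on the whole interval, so $c_{\min}$ is the unique optimiser of $g$ in the Hamiltonian.
\item If $V'(a)<-\phi m c_{\min}^{-\phi-1}$, then $g'<0$ on the whole interval, so $c_{\max}$ is the unique optimiser.
\end{itemize}

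These two cases are exactly \eqref{CtrlOpt}. To pass from pointwise optimisers to optimality of the feedback, we use a verification theorem: by Step~3 of the existence argument, any optimal control must a.e.\ attain the optimum in $H$ along its trajectory. Differentiability a.e.\ plus the fact that the trajectory spends zero Lebesgue time on a null set of states ensure the feedback is well defined along paths. The latter holds because the drift satisfies $c-\delta a\neq0$ off a single point, so the flow crosses null sets in zero time.

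\emph{Switching locations.} Suppose $V'$ is continuous on an interval. Then the two thresholds cannot be crossed without $V'$ passing continuously through the gap region $\pp{-\phi m c_{\min}^{-\phi-1},-\phi m c_{\max}^{-\phi-1}}$. On that region $g$ has an interior critical point, since $g$ is concave in $c$ given $m>0$. We would argue, using the HJB equation and the strict concavity, that this is incompatible with the structure asserted in \eqref{CtrlOpt}. Hence switches between $c_{\min}$ and $c_{\max}$ occur only at discontinuities of $V'$.

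\textbf{Main obstacle.} The delicate steps are this last claim and the justification of the verification step using only a.e.\ differentiability rather than $C^1$ regularity. In the gap region, strict concavity of $g$ actually gives an interior optimiser, not a bang-bang one. For the final statement we would therefore first try to show the gap region is not visited by optimal paths, or else interpret the statement as holding outside that region.
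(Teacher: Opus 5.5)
Your existence argument is fine and is essentially the paper's. The paper checks that $\mathcal{I}_0(a)$, the epigraph of $c\mapsto m c^{-\phi}$ paired with the affine drift $c-\delta a$, is convex, and then invokes Vermes' relaxed existence theorem and his remarks on simple strategies; your Jensen step uses the same convexity directly.

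The genuine gap is in the bang-bang part, where you use two incompatible conventions. In your two bullets you select $c_{\min}$ when $g'>0$ on $[c_{\min},c_{\max}]$. That means you \emph{minimise} $g$, as the $\min$ in \eqref{HJB} prescribes. But in the gap region $-\phi m c_{\min}^{-\phi-1}\le V'(a)\le -\phi m c_{\max}^{-\phi-1}$ you \emph{maximise} $g$ and obtain an interior optimiser. The paper's proof treats this region by staying with the $\min$ of \eqref{HJB}. Since $g''(c)=-\phi(\phi+1)m c^{-\phi-2}<0$, every interior critical point of $g$ is a maximum. Hence the minimum over $[c_{\min},c_{\max}]$ is always attained at an endpoint, namely the one with the smaller value of $g$, and there are no singular arcs. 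Without this step you only have \eqref{CtrlOpt} outside the gap region, which is not a bang-bang statement. Your switching-location paragraph contains no argument (``we would argue \dots''), and it cannot be completed that way: in the $\min$ convention, a continuous $V'$ crossing the indifference level $g(c_{\min})=g(c_{\max})$ itself produces a switch.

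Your unease about the gap region is nevertheless justified. Your Jensen step, like the paper's convexity of $\mathcal{I}_0(a)$, is the right tool for the \emph{maximisation} in \eqref{Cost}. For that problem, dynamic programming puts a supremum over $c$ in the Hamiltonian. With a supremum, $g'>0$ on the interval selects $c_{\max}$ and $g'<0$ selects $c_{\min}$. In the gap region the optimiser is then the interior feedback $c=(\phi m/(-V'(a)))^{1/(1+\phi)}$, the same critical-point rule used in Section~\ref{Sec4}. So \eqref{CtrlOpt} is not what comes out of that convention.

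The bang-bang form is reachable only through the $\min$ in \eqref{HJB} as written, which is the convention the paper's proof uses for this step. The paper's existence step, like yours, is tied to maximisation, so the tension you sensed is already present in the source. What does not work is using one convention for the endpoint cases and the other for the gap region.

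Separately, your verification claim that trajectories spend zero time on null sets fails at a switching point $a^*$ with $c_{\min}<\delta a^*<c_{\max}$ where the feedback drift points inward from both sides. There the state can stall for a positive time, precisely where $V'$ may not exist.
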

\begin{proof}
\underline{Existence of optimal policies.} We begin with noting that the set
\begin{align*}
    \mathcal{I}_0(a):=\Big\{(&r,c-\delta a,(\lambda_0+\lambda_1a)(\theta_1+\theta_2\min\set{a,A_{\max}}\ :\\&r\geq \min\set{a,A_{\max}}^\omega c^{-\phi},\ c\in[c_{\min},c_{\max}]\Big\}
\end{align*}
is convex for every $a\in\mathbb{R}_+$. This is a simple consequence of the linearity of the second term in the parenthesis and the fact that $c\mapsto \min\set{a,A_{\max}}^\omega c^{-\phi}$ is convex and the $r$ component describes the epigraph of a convex function.\\
As a consequence, the optimality results in \cite[Theorem 1]{Vermes1986} apply to get relaxed optimality and the comments on \cite[Page 203]{Vermes1986} (also on \cite[Page 168]{Vermes1986}) on \emph{simple strategies} provide the existence of usual optimal controls.\\
\underline{Bang-bang result.} Let us note that, for any $p\in\mathbb{R}$ fixed, the function
\[f_0(c):=pc-\min\set{a,A_{\max}}^\omega c^{-\phi},\ c\in\pp{c_{\min},c_{\max}},\]
satisfies $f_0''(c)=-a^\omega \phi(\phi+1)c^{-\phi-2}<0$ which implies that any interior critical point of $f_0$ can only provide a maximum, thus ruling out the existence of singular arcs. As such, the optimal control, should it exist, is of bang-bang form.\\
If $a\in \mathbb{R}$ is a derivability point for $V^{opt}$, and $\pr{V^{opt}}'(a)+\phi \min\set{a,A_{\min}}^\omega c_{\max}^{-\phi-1}> \phi \min\set{a,A_{\min}}^\omega c^{-\phi-1},\ \forall c\in\pp{c_{\min},c_{\max}}$, then the switching function corresponding to $f_0$ with $p=\pr{V^{opt}}'(a)$ is non-decreasing, and the optimal control minimizing the Hamiltonian is $u_{\min}$. The remaining optimality of $u_{\max}$ is similar.
\end{proof}

\section{The Uncapped Framework $A_{\max}=+\infty$}\label{Sec4}
\subsection{Dynamics and HJB Equation}
In this case, we deal with linear dynamics 
\begin{equation*}
  dA(t) = \bigl[c(t) - \delta A(t)\bigr]\,dt + \pr{\theta_1+\theta_2A(t^-)}\,dN(t).
\end{equation*}
Alternatively, this can be written with respect to an exogenous Poisson random measure $M(dt,du)$ whose compensator is \[\hat M(dt,du)=\mathbf{1}_{\mathbb{R}_+^*}(u)dudt,\]by writing
\begin{align*}
  dA(t) = &\bigl[c(t) - \delta A(t)\bigr]\,dt \\+ &\pr{\theta_1+\theta_2A(t^-)}\int_{\mathbb{R}_+^*}\mathbf{1}_{u\leq \lambda(A(t^-))}\,M(dt,du).
\end{align*}
Equivalently, one generates a family of i.i.d. $[0,1]$ uniformly-valued random variables $U_k$. The first jump happens at 
\[
\tau_1 := \inf\Bigl\{t\geq 0:\ \int_0^t \lambda\bigl(A(s)\bigr)\,ds \geq -\ln U_1\Bigr\},
\]
where the solution can be explicitly computed through a variation-of-constants approach as 
\begin{align*}
    A(t)=e^{-\delta t}\pr{A(0)+\int_0^te^{\delta s}c(s)\,ds},\ \tau_1\ge t\ge 0.
\end{align*}
In this case, the infinitesimal generator is
\begin{equation}
  \mathcal{L}^c f(a)
  = f'(a)\bigl(c - \delta a\bigr)
    + \lambda(a)\bigl[f(a+J(a)) - f(a)\bigr],
\end{equation}
with \begin{equation}
    \begin{cases}
        \lambda(a)=&\lambda_1+\lambda_2 a,\\
        J(a)=&\theta_1+\theta_2 a,\ a\in\mathbb{R}_+.
    \end{cases}
\end{equation}
Formally, the Hamilton--Jacobi--Bellman equation is
\begin{equation}\label{HJB2}
\begin{split}
  &\rho V(a)
  - H(a,V(\cdot),V'(a))=0,\text{ where}\\
  &H(a,V(\cdot),V'(a)):=\inf_{c\in\mathbb{R}_+^*}\pr{\mathcal{L}^c V(a)-a^{\omega} c^{-\phi}}.
  \end{split}
\end{equation}
The rigorous arguments can be obtained by passing to the limit $c_{\min}\rightarrow 0^+,\ c_{\max}\rightarrow\infty$, and $A_{\max}\rightarrow\infty$. Since the Hamiltonian needs to be finite, the infimum can only happen at a critical point, i.e.
\[V'(a)=-\phi a^\omega c^{-1-\phi},\] which, provided that $V$ is found to be absolutely continuous, identifies $c$ as a feedback control
\[c(a)=\pr{\frac{-\phi a^\omega}{V'(a)}}^{\frac{1}{1+\phi}}.\] By recalling that $V(\alpha)-V(\beta)=\int_\beta^\alpha V'(r)\, dr$, and with the assumption that $V(0)=0$, this leads to 
\begin{align*}
    &\rho\phi\int_0^a r^\omega c^{-1-\phi}(r)\,dr-\phi a^\omega c(a)^{-1-\phi}(c(a)-\delta a)\\&-a^\omega c(a)^{-\phi}-\phi\pr{\lambda_1+\lambda_2 a}\int_a^{(1+\theta_2)a+\theta_1} r^\omega c^{-1-\phi}(r)\,dr=0,
\end{align*}or, again, with $\zeta:=c^{-1-\phi}$,
\begin{equation}\label{VH} \begin{split}
0
&= \rho\phi \int_0^a r^\omega \zeta(r)\,dr
  - \phi a^\omega \zeta(a)\bigl(\zeta(a)^{-\tfrac{1}{1+\phi}} - \delta a\bigr)
  \\&- a^\omega \zeta(a)^{\tfrac{\phi}{1+\phi}}
  - \phi\bigl(\lambda_1 + \lambda_2 a\bigr)
    \int_a^{(1+\theta_2)a + \theta_1} r^\omega \zeta(r)\,dr.
\end{split}
\end{equation}
\subsection{The explicit solution using power ansatz}
For general parameters
\[
\lambda_1,\lambda_2,\theta_1,\theta_2,\delta,\omega,\phi, \rho
\]
and without imposing a specific parametric form on $\zeta$, the equation \eqref{VH} belongs to the nonlinear mixed Volterra-Fredholm integral equation of the second kind which does not admit explicit solutions. The approach needs to be numerical.\\
Assume a general power law
\[
\zeta(r)=K r^{\alpha},\qquad K\neq 0,\ \alpha\in\mathbb{R}.
\]
For the equation to hold for all \(a>0\) with a single power of \(a\), exponents must match which forces
\[
\alpha=-1-\phi,
\]
and this can only be obtained when $\lambda_2=0$ and either $\lambda_1=0$ or $\theta_1=0$.\\
With \(\zeta(r)=K r^{-1-\phi}\), all integrals are elementary and every term scales like \(a^{\omega-\phi}\) in the feasible cases (see hereafter), so the equation reduces to an algebraic condition on \(K\). We assume \[\omega>\phi+1.\]

\paragraph{No jumps (pure drift)}

Assume
\begin{equation}\label{Case1}
    \lambda_1=\lambda_2=0.
\end{equation}

The equation simplifies to
\begin{align*}
0
&= \rho\phi \int_0^a r^\omega \zeta(r)\,dr
  - \phi a^\omega \zeta(a)\bigl(\zeta(a)^{-\tfrac{1}{1+\phi}} - \delta a\bigr)
  \\&- a^\omega \zeta(a)^{\tfrac{\phi}{1+\phi}}.\end{align*}
Substituting \(\zeta(r)=K r^{-1-\phi}\) gives
\begin{align*}
0
&= \left(
    \frac{\rho\phi K}{\omega-\phi}
    + \phi\delta K
    - \phi K^{\tfrac{\phi}{1+\phi}}
    - K^{\tfrac{\phi}{1+\phi}}
   \right) a^{\omega-\phi},
\end{align*}

hence,
\(
K
=\left(
\frac{\phi+1}{\pr{\dfrac{\rho}{\omega-\phi}+\delta}\phi}
\right)^{1+\phi}.
\)

\paragraph{Constant intensity and proportional jumps}
Assume
\begin{equation}\label{Case2}
\lambda_2=0,\quad \theta_1=0,\quad \theta_2>-1.
\end{equation}
In this setting,
\[
\int_a^{(1+\theta_2)a} r^{\omega}\zeta(r)\,dr
=\frac{K}{\omega-\phi}\Bigl((1+\theta_2)^{\omega-\phi}-1\Bigr)a^{\omega-\phi}.
\]
The equation \eqref{VH} (up to a multiplicative term $a^{\omega-\phi}$) becomes
\begin{equation*}
\begin{split}
0
&=\frac{\rho\phi K}{\omega-\phi}-(\phi+1)K^{\frac{\phi}{1+\phi}}+\delta \phi K
\\&\quad -\lambda_1\phi K\frac{(1+\theta_2)^{\omega-\phi}-1}{\omega-\phi}.
\end{split}
\end{equation*}
This leads to
$
K^*
=\left(
\frac{\phi+1}{\left(
\frac{\rho-\lambda_1\bigl((1+\theta_2)^{\omega-\phi}-1\bigr)}{\omega-\phi}
+\delta
\right)\phi}
\right)^{1+\phi}.
$

We have proven the following \begin{theorem}[Explicit value functions for \(\lambda_2=0\)]\label{Th3}
  Assume
  $
    \lambda_2=0,\quad \theta_2>-1,\quad \text{and }\lambda_1\theta_1=0.
  $
  Furthermore, assume that \(\omega>\phi+1.\)
  \begin{enumerate}
    \item The optimal control is linear in \(a\):
    \begin{equation}\label{copt}
      \begin{cases}
        c^*(a) &= \kappa^* a,\\[0.5em]
        \displaystyle \kappa^*
        &= \frac{\phi}{\phi+1}\left(
\frac{\rho-\lambda_1\bigl((1+\theta_2)^{\omega-\phi}-1\bigr)}{\omega-\phi}
+\delta
\right).
      \end{cases}
    \end{equation}
    \item The optimal value function is
    \begin{equation}
      V^*(a)
      = 
      - \frac{K^*\,\phi}{\omega-\phi}\,a^{\omega-\phi}.
    \end{equation}
  \end{enumerate}
\end{theorem}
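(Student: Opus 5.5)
The plan is to turn the formal computation preceding the statement into a verification argument for the smooth candidate $V^*(a)=-\frac{K^*\phi}{\omega-\phi}a^{\omega-\phi}$, treating both sub-cases of $\lambda_1\theta_1=0$ at once: the pure-drift case \eqref{Case1} is formally \eqref{Case2} with $\lambda_1=0$. Throughout I would assume that the bracket $\frac{\rho-\lambda_1((1+\theta_2)^{\omega-\phi}-1)}{\omega-\phi}+\delta$ is strictly positive. This is implicit in the definition of $K^*$, and it gives $K^*>0$ and $\kappa^*>0$.

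\textbf{Step 1 (the candidate solves \eqref{HJB2}).} Since $\omega-\phi>1$, $V^*\in C^1(\mathbb{R}_+^*)$ with $(V^*)'(a)=-\phi K^*a^{\omega-\phi-1}<0$. For fixed $a>0$, the map $c\mapsto (V^*)'(a)c-a^\omega c^{-\phi}$ is strictly concave on $\mathbb{R}_+^*$ and tends to $-\infty$ at both ends. Its unique critical point is therefore its maximizer, which is the maximizing consumption in the addict's problem \eqref{Cost}. This critical point is $c=\big(-\phi a^\omega/(V^*)'(a)\big)^{1/(1+\phi)}=(K^*)^{-1/(1+\phi)}a$. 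Using the formula for $K^*$, one checks that $(K^*)^{-1/(1+\phi)}=\kappa^*$, which is item 1. Plugging this feedback back in reproduces exactly the algebraic identity that was solved for $K^*$ before the statement. To see this, note that with $\theta_1=0$ or $\lambda_1=0$ the jump term is $\lambda_1\big(V^*((1+\theta_2)a)-V^*(a)\big)$, a multiple of $a^{\omega-\phi}$. Hence $V^*$ is a classical solution of the HJB equation associated with the maximization in \eqref{Cost}, with the supremum attained at $c^*(a)=\kappa^*a$. The condition $\theta_2>-1$ keeps post-jump states in $\mathbb{R}_+^*$.

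\textbf{Step 2 (upper bound: $J(c)\le V^*(a)$ for every admissible $c$).} Fix an admissible $c>0$ and $a>0$. By Proposition~\ref{PropPoz} the trajectory stays nonnegative; in the present cases it even stays positive, because the flow from $a>0$ cannot reach $0$ and the jumps are multiplicative. Apply Dynkin's formula to $e^{-\rho t}V^*(A(t))$ on $[0,T\wedge\tau_n]$, where $\tau_n$ is an exit time of $[1/n,n]$ and the compensated jump martingale is localized. By Step 1, $\rho V^*-\mathcal{L}^cV^*+a^\omega c^{-\phi}\ge 0$ for every $c$, which gives
\[
V^*(a)\ge \mathbb{E}\Big[\int_0^{T\wedge\tau_n}e^{-\rho s}u(c(s),A(s))\,ds\Big]+\mathbb{E}\big[e^{-\rho (T\wedge\tau_n)}V^*(A(T\wedge\tau_n))\big].
\]
I then let $n\to\infty$ and $T\to\infty$. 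The integral term converges by monotone convergence, since $u\le 0$. The last term is $\le 0$ and has the wrong sign for a direct bound, so one needs the transversality condition $\liminf_T e^{-\rho T}\mathbb{E}[A(T)^{\omega-\phi}]=0$.

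\textbf{Main obstacle.} Securing this transversality condition for arbitrary controls is the hard part. Large consumption inflates $A$, and nothing in the model caps it. I would handle this as is standard by restricting the admissible class to controls satisfying the transversality condition. Alternatively, one can observe that if it fails then the cost is $-\infty$. My first attempt would be to show this directly: a large $A$ at time $T$ forces $u(c,A)=-A^\omega c^{-\phi}$ to be very negative unless $c$ is large, which in turn keeps $A$ large. This would be done via Proposition~\ref{PropComp} (comparison in $c$). If that fails, I would fall back on the restricted admissible class.

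\textbf{Step 3 (optimality of $c^*$).} Under the feedback $c^*=\kappa^*A$ the dynamics become linear and explicit: $A(t)=a\,e^{(\kappa^*-\delta)t}(1+\theta_2)^{N(t)}$, with $N$ a Poisson process of rate $\lambda_1$. Hence $\mathbb{E}A(t)^{\omega-\phi}=a^{\omega-\phi}\exp\!\big\{[(\omega-\phi)(\kappa^*-\delta)+\lambda_1((1+\theta_2)^{\omega-\phi}-1)]t\big\}$. Substituting $\kappa^*$ from \eqref{copt}, the exponent minus $\rho$ simplifies to $-(\omega-\phi)\kappa^*/\phi<0$. So transversality holds along $c^*$, and equality holds in Dynkin's formula by Step 1. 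Letting $T\to\infty$ gives $J(c^*)=V^*(a)$. Together with Step 2 this yields item 2, and $c^*$ is optimal.
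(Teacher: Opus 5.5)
Your proof is correct in outline and takes a genuinely different route from the paper. The paper's argument for Theorem~\ref{Th3} is only the formal computation that precedes the statement. It imposes the first-order condition $V'(a)=-\phi a^\omega c^{-1-\phi}$ and assumes $V$ absolutely continuous with $V(0)=0$, which turns the HJB into the integral equation \eqref{VH} for $\zeta=c^{-1-\phi}$. It then notes that a power ansatz $\zeta=Kr^{\alpha}$ forces $\alpha=-1-\phi$ and is consistent only when $\lambda_2=0$ and $\lambda_1\theta_1=0$, and solves the resulting algebraic equation for $K$. It never shows that the candidate is the value function, nor that $c^*$ beats the other controls.

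Your Step~1 is the same algebra carried out directly on $V^*$. You correctly use the $\sup$ form of the HJB: the Hamiltonian in \eqref{HJB2}, written with $\inf$, equals $-\infty$ for every $a>0$, and the paper really only uses the critical-point condition. Your Steps~2--3 supply the verification theorem the paper lacks. The paper's route explains where the candidate and the hypotheses $\lambda_2=0$, $\lambda_1\theta_1=0$ come from. Yours actually proves optimality, makes explicit the finiteness condition (your positive bracket, only alluded to in Section~\ref{Sec5}), and checks transversality along $c^*$ through the explicit moment formula; the exponent $-(\omega-\phi)\kappa^*/\phi$ you obtain is correct.

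The step you leave open can be closed without restricting the admissible class. Your intuition is right, but Proposition~\ref{PropComp} is not the tool: it requires $\theta_2\ge 0$, while the theorem allows $\theta_2\in(-1,0)$, and comparing trajectories gives no lower bound on the cost anyway. What works is a unit-window estimate. On a jump-free stretch, split $[\sigma,\sigma+1]$ at the first hitting times of $2^kA(\sigma)$, $k\ge 1$. On each piece, $A$ is comparable to $2^kA(\sigma)$ and $\int c\,ds\le 2^{k+1}(1+\delta)A(\sigma)$. Apply Jensen to $c\mapsto c^{-\phi}$ on each piece, then H\"older in $k$ using $\sum_k\tau_k=1$. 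This yields
\[
\int_\sigma^{\sigma+1}A^\omega c^{-\phi}\,ds\ge c_0A(\sigma)^{\omega-\phi}
\]
with $c_0>0$ independent of the control.

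Since $\lambda_2=0$, no jump occurs in $(\sigma,\sigma+1]$ with conditional probability $e^{-\lambda_1}$. Hence, for every stopping time $\sigma$,
\[
c_0e^{-\lambda_1-\rho}\,\mathbb{E}\bigl[e^{-\rho\sigma}A(\sigma)^{\omega-\phi}\bigr]\le\mathbb{E}\int_\sigma^{\sigma+1}e^{-\rho s}\,|u(c(s),A(s))|\,ds .
\]
If the payoff of $c$ is finite, take $\sigma=T\wedge\tau_n$, let $n\to\infty$ by dominated convergence, then let $T\to\infty$; this kills the terminal term. It also justifies the passage $n\to\infty$ in your localized Dynkin inequality, which you perform silently and where Fatou goes the wrong way because $V^*\le 0$. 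If the payoff is $-\infty$, there is nothing to prove.
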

\begin{figure}[h]
  \centering
  \begin{subfigure}[t]{0.48\columnwidth}
    \centering
    \includegraphics[width=\linewidth]{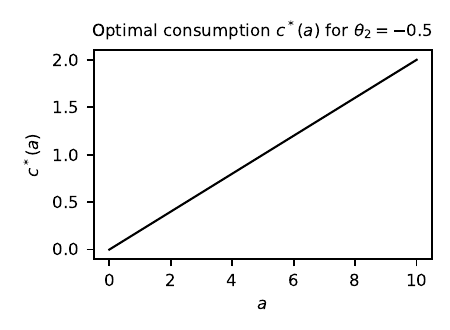}
    \caption{}
    \label{fig:cstar-theta2minus05}
  \end{subfigure}
  \hfill
  \begin{subfigure}[t]{0.48\columnwidth}
    \centering
    \includegraphics[width=\linewidth]{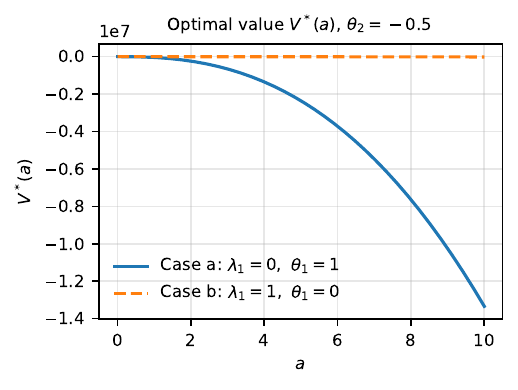}
    \caption{}
    \label{fig:Vstar-theta2minus05}
  \end{subfigure}

  \caption{Optimal consumption $c^*(a)$ and value $V^*(a)$
    for $\theta_2 = -0.5$ ($\phi   = 2.0,\ \omega = 4.5,\ \rho   = 0.05,\ \delta = 0.02,\ \lambda_2=0$).}
  \label{fig:theta2minus05-onecolumn}
\end{figure}
\section{Implications for Addiction}\label{Sec5}

The optimality theorems have several implications for the addiction dynamics.

\subsection{Implications for addiction in the uncapped framework.}
\textbf{- Proportional dependence of consumption on addiction capital.}
  The optimal feedback control is
  \[
    c^*(a) = \kappa^* a,
  \]
  so that, as the addiction capital $a$ increases, the optimal current consumption $c^*(a)$ also increases proportionally. This captures a \emph{reinforcing} effect: when the addiction stock is high, the marginal effective dose needed for utility is higher in absolute terms, and it becomes optimal to maintain (or raise) current consumption in line with $A(t)$. This differs from the inverse dependence in the original Becker--Murphy model and arises here from the particular HJB structure and sign convention.

\textbf{- Low consumption at low addiction capital.}
  For small $a>0$, the optimal policy prescribes relatively low consumption levels $c^*(a)$, corresponding to mild consumption episodes at early stages of addiction (or immediately after quitting) when the addiction capital is low.

\textbf{- Effect of relapse or intervention shocks.}
  The Poisson parameters $\lambda_1$ and $\theta_2$ describe jump risk. If $\theta_2>0$ (relapses or positive shocks that increase $A$), then larger $\lambda_1$ increases the term $(1+\theta_2)^{\omega-\phi}-1$ and thus lowers
  \[
    \frac{\rho - \lambda_1\bigl((1+\theta_2)^{\omega-\phi}-1\bigr)}{\omega-\phi},
  \]
  which propagates to $K^*$ and $\kappa^*$; depending on parameter values, this may either strengthen or attenuate the proportionality of $c^*(a)$ to $a$. If $\theta_2\in(-1,0)$ (negative shocks such as partial resets due to treatment), the sign of this contribution reverses, tending to weaken the reinforcement and reduce optimal consumption for given $a$. Mathematically, the value function is finite only if the averaged multiplicative effect is mitigated by low average number of jumps per time unit $\lambda_1<<1$, or by the large preference parameter $\rho>>1$.

\textbf{- Long--run behavior.}
  Under the stationary feedback $c^*(a)=\kappa^* a$, the addiction capital $A(t)$ evolves toward a stochastic steady regime in which the linear drift $\kappa^* A(t)-\delta A(t)$ and the jump component balance, provided $\kappa^*>0$ and suitable integrability conditions hold. In this regime, the agent optimally maintains a nonzero addiction capital and adjusts consumption proportionally to $A(t)$ and to the parameters governing depreciation, discounting, and jump risk.
\subsection{Implications for addiction in the capped framework.}
\textbf{- Alternating addiction phases.}
From the point of view of addiction dynamics, the bang--bang control rule in Theorem \ref{Th1} implies that the addict never chooses an interior level of consumption: at each addiction level $a$, it is optimal either to consume at the lowest feasible rate $c_{\min}$ (a ``recovery/maintenance phase'') or at the highest feasible rate $c_{\max}$ (a ``binge phase''). The choice between these two regimes is governed by the sign and magnitude of $(V^{opt})'(a)$, which measures the marginal value (or marginal cost) of additional addiction capital.

Regime switches occur at threshold addiction levels where the marginal value of addiction capital changes abruptly.

\section{Conclusions and Perspectives}\label{Sec6}
This classical PDMP formulation keeps the same economic structure as the original Becker-Murphy model but adds shocks by occasional, state-dependent relapse jumps, while preserving a clear interpretation of all parameters in terms of addiction dynamics and relapse risk.\\
The uncapped framework both connects our analysis to existing results in the literature and yields explicit feedback controls together with closed-form value functions.\\
The optimality analysis shows that the capped framework, in which consumption is bounded, is the natural representation of environments where the availability of addictive goods is limited by regulatory, market, or physiological constraints. From this perspective, the bang-bang structure of optimal controls and the existence of an addiction cap make the capped model a more realistic description of actual addictive behavior than the purely uncapped benchmark. At the same time, our formulation treats the cap via a hard state constraint; a natural extension is to replace this by genuinely reflected dynamics at endogenous addiction boundaries, using reflected PDMP techniques.\\

Future work will also include numerical schemes for the associated HJB equations with jumps and the calibration of model parameters to empirical data on consumption paths and relapse episodes, allowing for quantitative evaluation of policy interventions in addiction markets.

\section{Appendix}\label{SecApp}
\begin{proof}\textbf{(Proof or Proposition \ref{PropPoz})}
    One notes that $c-\delta a\mid_{a=0}\geq 0$, such that the deterministic part keeps $A$ non-negative if the starting point is non-negative. Second, at the first jump time $\tau_1$, since $\theta_2\geq -1$, and $\theta_1\geq 0$, it follows that $A(\tau_1)\geq 0$. The argument can then be repeated on every $\pp{\tau_k,\tau_{k+1}}$, for $k\geq 1$.
\end{proof}
\begin{proof}\textbf{(Proof of Proposition \ref{PropComp})} 
    The first remark concerns the monotonicity of the deterministic dynamics. If $c_1\leq c_2$ are two Borel-measurable $\mathbb{R}_+$-valued control policies, then $\tilde A_1\leq \tilde A_2$, where 
\[d\tilde A_j=\pp{c_j(t)-\delta \tilde A_j(t)}\, dt,\ j\in\set{1,2}.\]
This is standard, but it also follows from the explicit solution of the linear equation satisfied by $\tilde A_1-\tilde A_2$.\\
Second, the jump mechanism can be simulated using a Poisson measure $M$ on the extended space $\mathbb{R}_+$ whose compensator is \(\hat{M}(ds,du)=dsdu,\) by considering
\[d\tilde{A}_t=(c(t)-\delta\tilde A(t))dt+J(\tilde A(t^-))\mathbf{1}_{u\leq \lambda(\tilde A(t^-))}M(dt,du).\]
Note that, due to monotonicity of $\lambda$,  \[\set{u\leq \lambda(\tilde A_1(t^-))}\subset \set{u\leq \lambda(\tilde A_2(t^-))},\] hence, by invoking the monotonicity of $J$,
\[J(\tilde A_1(t^-))\mathbf{1}_{u\leq \lambda(\tilde A_1(t^-))}\leq J(\tilde A_2(t^-))\mathbf{1}_{u\leq \lambda(\tilde A_2(t^-))}.\]This implies that the post-jump position maintains the order, and our result is complete by recurrence over the jumping times (which are commonly generated by $M$).
\end{proof}
\begin{proof}\textbf{(Proof of Theorem \ref{Th1})}
    The proof follows from a direct application of \cite[Theorem 1.1]{Soner1986}. Indeed, $a\mapsto b(a):=c-\delta a$ is Lipschitz-continuous, thus taking care of assumption (1.1) in \cite{Soner1986}. The jump intensity $\lambda\geq 0$, taking care of the assumption (1.5) in \cite{Soner1986}, and both $J$ and $\lambda$ are bounded.\\
    Although $b$ is not bounded, one easily notes that the deterministic part $\frac{dA}{dt}=c-\delta A$ keeps $\mathbb{R}_+$ invariant, and jumps only increase the addiction capital, thus keeping $\mathbb{R}_+$ invariant (see also \cite{GoreacViab2012}). As such, assumption (1.2) in \cite{Soner1986} can also be dealt with. Finally, the weak continuity of the post-jump measure (i.e., the continuity of
    $a\mapsto h(a+J(a))$, for continuous functions $h$) takes care of assumption (1.3) in \cite{Soner1986}. The assumption (1.4) in \cite{Soner1986} is irrelevant for non-constrained dynamics.
\end{proof}
\section*{Aknowledgements}
D.G. acknowledges financial support
from National Sciences and Engineering Research Council (NSERC), Canada, Grant/Award RGPIN-
2025-03963. D. G. and J. L. acknowledge financial support from the NSF of Shandong Province
(ZR2023ZD35), the NSF of People's Republic of China (W2511002, 12031009), and the National
Key R and D Program of China (2018YFA0703900).


\begin{thebibliography}{99}
\bibitem{BeckerMurphy1988}
G.~S. Becker and K.~M. Murphy,
``A theory of rational addiction,''
\emph{J. Polit. Econ.}, vol.~96, no.~4, pp.~675--700, 1988.

\bibitem{Campillo2019}
F.~Campillo,
``Introduction to (piecewise deterministic) Markov processes and applications in biology,''
Lecture notes, BCAM course, 2019. [Online]. Available: \url{http://www-sop.inria.fr/members/Fabien.Campillo/wp-content/uploads/pdf/slides-2019-bcam.pdf}

\bibitem{Davis1984}
M.~H.~A. Davis, 
``Piecewise-deterministic Markov processes: A general class of non-diffusion stochastic models,'' 
\emph{J. Roy. Statist. Soc. Ser. B}, vol.~46, no.~3, pp.~353--388, 1984.

\bibitem{Davis1993}
M.~H.~A. Davis, 
\emph{Markov Models and Optimization}. 
London, U.K.: Chapman \& Hall, 1993.

\bibitem{CostaDufour2008}
O.~L.~V. Costa and F.~Dufour, 
``The vanishing discount approach for the average continuous control of piecewise deterministic Markov processes,'' 
\emph{J. Appl. Probab.}, vol.~45, no.~3, pp.~742--756, 2008.

\bibitem{Dufour2016}
F.~Dufour and M.~H. Costa, 
``Optimal impulsive control of piecewise deterministic Markov processes,'' 
\emph{Stochastics}, vol.~88, no.~1, pp.~85--104, 2016.

\bibitem{GoreacViab2012}
D.~Goreac, 
``Viability, invariance and reachability for controlled piecewise deterministic Markov processes associated to gene networks,'' 
\emph{ESAIM Control Optim. Calc. Var.}, vol.~18, no.~2, pp.~401--426, 2012.

\bibitem{GoreacSICON2015}
D.~Goreac, 
``Asymptotic control for a class of piecewise deterministic Markov processes associated to temperate viruses,'' 
\emph{SIAM J. Control Optim.}, vol.~53, no.~4, pp.~1860--1891, 2015.

\bibitem{GoreacSereaZubov2012}
D.~Goreac and O.-S. Serea,
``Linearization techniques for controlled piecewise deterministic Markov processes; application to Zubov's method,''
\emph{Appl. Math. Optim.}, vol.~66, no.~1, pp.~27--48, 2012.

\bibitem{Graham1992}
C.~Graham, ``McKean--Vlasov It\^o--Skorohod equations, and nonlinear
  diffusions with discrete jump sets,'' \emph{Stochastic Processes and
  Their Applications}, vol.~40, no.~1, pp.~69--82, 1992.

\bibitem{GruberKoszegi2001}
J.~Gruber and B.~K{\H o}szegi,
``Is addiction ``rational''? Theory and evidence,''
\emph{Q. J. Econ.}, vol.~116, no.~4, pp.~1261--1303, 2001.

\bibitem{SeierstadSydsaeter1987}
A.~Seierstad and K.~Syds{\ae}ter, 
\emph{Optimal Control Theory with Economic Applications}. 
Amsterdam, The Netherlands: North-Holland, 1987.

\bibitem{Soner1986}
H.~M. Soner, 
``Optimal control with state-space constraint. II,'' 
\emph{SIAM J. Control Optim.}, vol.~24, no.~6, pp.~1110--1122, 1986.

\bibitem{StokeyLucasPrescott1989}
N.~L. Stokey, R.~E. Lucas, Jr., and E.~C. Prescott, 
\emph{Recursive Methods in Economic Dynamics}. 
Cambridge, MA, USA: Harvard Univ. Press, 1989.

\bibitem{AddictionNeuroModel2022}
R.~Verdejo-Garc{\'i}a \emph{et al}.,
``Computational models of behavioral addictions: State of the art and future directions,''
\emph{Curr. Opin. Behav. Sci.}, vol.~45, pp.~101--110, 2022.

\bibitem{Vermes1986}
D.~Vermes, 
``On the optimal control of piecewise-deterministic Markov processes,'' 
\emph{Stochastics}, vol.~17, no.~3, pp.~165--188, 1986.

\bibitem{YangZhang2022}
Z.~Yang and X.~Zhang,
``A stochastic model of rational addiction,''
\emph{Ann. Econ. Finance}, vol.~23, no.~2, pp.~223--251, 2022.
\end{thebibliography}
\end{document}